\newcommand{\R}{{\mathbb{R}}}
\renewcommand{\d}{{\rm d}}
\theoremstyle{plain}
\newtheorem{thm}{Theorem}[section]
\newtheorem{proposition}[thm]{Proposition}
\newtheorem{lemma}[thm]{Lemma} \newtheorem{corollary}[thm]{Corollary}
\theoremstyle{definition}
 \newtheorem{remark}[thm]{Remark}
\newtheorem*{remarks*}{Remarks}
\newtheorem*{remark*}{Remark}
\newtheorem*{defn*}{Definition}
\newtheorem{conjecture}[thm]{Conjecture}
\numberwithin{equation}{section}
\title{The Howland - Kato Commutator Problem, II}
\author{Ira Herbst}
\address{Department of Mathematics \\
  University of Virginia \\
  Charlottesville \\
  VA 22904\\ U.S.A.}
\email{iwh@virginia.edu}
\date{\today}
\begin{document}

\begin{abstract}
We continue to investigate the following problem: For what bounded measurable real $f$ and $g$ is the commutator $i[f(P),g(Q)]$ positive?   In this work we consider the situation where the commutator is a finite rank operator.
\end{abstract}

 \maketitle

\section{Introduction}
This is the second in a series of papers on the Howland-Kato commutator problem.  We refer the reader to the original paper of Kato [1] and the paper [3]. \\ 

We consider real, bounded, measurable $f$ and $g$.  If $i[f(P),g(Q)] \ge 0$, it follows that the commutator is trace class ([1],[3]). Thus it is natural to consider the case where the commutator is finite rank.  Kato considered the (non-zero) rank one case and showed that in this case $f$ and $g$ were essentially multiples of the hyperbolic tangent. Kato assumed that $f$ and $g$ were absolutely continuous with $L^1$ derivatives but this can be proved, see [3]. Specifically Kato showed $g(x) = c_1\tanh\hat r (x-t) + c_2, f(\xi) = c_1' \tanh \hat r' (\xi - t') + c_2'$ where the $c_j$ and $c_j'$ are real constants with $c_1c_1' > 0$ and $\hat r \hat r' = \pi/2$. (Kato derived a differential equation for $g$ which he then solved.)  If we define $K_r$ as the set of all bounded real functions, $f:\R \rightarrow \R$ with an analytic continuation to the strip $|\text{Im} z| < r$ satisfying the additional condition that $\text{Im}f(z) \text{Im} z \ge 0$ in this strip, then the functions that Kato found in the rank one case satisfy $sg\in K_r, sf\in K_{r'}$ where $\hat r = \pi/2 r, \hat r' = \pi/2r'$, and $s=\pm 1$.  Kato felt there was reason to believe that all pairs of $g$ and $f$ would be given by $g = s \tanh( \hat r \cdot)*d\mu + c$  and $f = s\tanh( \hat r'\cdot)* d\nu + c'$ where $\mu$ and $\nu$ are finite positive measures, $s= \pm 1$, $c$ and $c'$ are constants, and $rr'=\pi/2$.  We will call this the Kato conjecture.  In this paper we will consider real bounded measurable $f$ and $g$ such that

\begin{equation} \label{commutatorassumption}
i[f(P),g(Q)] = \sum_{j = 1}^N (\phi_j,\cdot)\phi_j
\end{equation}
where the $\phi_j$'s are linearly independent functions in $L^2(\mathbb{R})$.  From [3] we know that if the commutator is non-zero as well as non-negative then $f$ and $g$ are monotone.  Without loss of generality we will always assume that they are increasing.  In the next section we will show that $g \in K_r$ and $ f\in K_{r'}$ for some positive $r$ and $r'$. In addition we will prove some exponential estimates.  In his paper Kato showed that $g\in K_r$ if and only if $g(x) = \int \tanh \hat r(x-t) d\mu(t) + c$ where $\hat r = \pi/2r, \mu$ is a finite positive measure, and $c$ is a real constant.  We will make use of this representation.  In addition we will make use of the integral kernel, $K(x,y)$, of the commutator $i[f(P),g(Q)]$ given by 

\begin{equation} \label{kernel}
K(x,y) = \frac{1}{\sqrt{2\pi}} \frac{g(x) - g(y)}{x-y} \widehat{f'}(y-x) = \sum_j\phi_j(x)\overline{\phi_j(y)}
\end{equation}
  In the case at hand (finite rank, non-zero, non-negative commutator) $f'$ is in $L^1$, $g$ is $C^1$, and this expression is well-defined. In fact $f', g'$ and the $\phi_j$'s are in $\mathcal{S}(\mathbb{R})$ (see [3]). If we conjugate (\ref{commutatorassumption}) with the Fourier transform we obtain 

$$i[-g(-P),f(Q)] = \sum_{j = 1}^N (\widehat{\phi_j},\cdot)\widehat{\phi_j}$$
with integral kernel $\tilde{K}(\xi,\eta)$ given by

\begin{equation} \label{Fkernel}
\tilde{K}(\xi,\eta) = \frac{1}{\sqrt{2\pi}} \frac{f(\xi) - f(\eta)}{\xi-\eta} \widehat{g'}(\xi-\eta) = \sum_j\widehat{\phi_j}(\xi)\overline{\widehat{\phi_j}(\eta)}
\end{equation}

Thus if we prove something about $g$ the same proof gives an analogous statement for $f$ and vice versa.  

\section{Acknowledgement}

Useful conversations with Richard Froese are gratefully acknowledged.

\section{analyticity}
 
\begin{thm}\label{minimalrr'}
With $f$ and $g$ as above, assume without loss of generality that $f$ and $g$ are monotone increasing (see [3]).  Then there exist $r$ and $r'$, both positive so that $g\in K_r$ and $f\in K_{r'}$.  The functions $\phi_j$ are analytic in the strip $S_r: = \{z \in \mathbb{C}: |\text{Im} z| < r\}$.  They satisfy $\int|\phi_j(x+iy)|^2 e^{2s|x|}dx < \infty$ for $|y| < r$ and $s < r'$.  As a consequence $\int g'(x)e^{2s|x|} dx < \infty$ for $s < r'$ and $\int f'(\xi)e^{2s|\xi|}d\xi < \infty$ if $s < r$.  We have $rr'\le \pi/2$.
\begin{remark}
Note that $\int|\phi_j(x+iy)|^2e^{2sx}dx = \int |\hat \phi_j(\xi + is)|^2e^{-2y\xi}d\xi$.
\end{remark}
\end{thm}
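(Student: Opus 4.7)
The plan proceeds in four stages: reducing the claimed exponential estimates to pointwise identities for the $\phi_j$, representing each $\phi_j$ via the data $(g,\widehat{f'})$ using the finite-rank structure, bootstrapping into analyticity via the rank constraint and positivity, and finally deducing $rr'\le\pi/2$ from Kato's integral representation.

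Evaluating (\ref{kernel}) on the diagonal $x=y$ and (\ref{Fkernel}) on $\xi=\eta$ gives
\[
\sum_j|\phi_j(x)|^2 = \frac{\widehat{f'}(0)}{\sqrt{2\pi}}\,g'(x),\qquad \sum_j|\hat\phi_j(\xi)|^2 = \frac{\widehat{g'}(0)}{\sqrt{2\pi}}\,f'(\xi).
\]
Both $\widehat{f'}(0)$ and $\widehat{g'}(0)$ are strictly positive since $f,g$ are bounded, nonconstant, and monotone increasing. Hence the claimed weighted $L^2$ bounds on the $\phi_j$ are equivalent (via the identity in the remark, once analyticity is in hand) to the claimed exponential $L^1$ bounds on $g'$ and $f'$. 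Next, by linear independence of $\{\phi_j\}_{j=1}^N$, I choose real points $y_1,\dots,y_N$ so that $A=[\overline{\phi_j(y_k)}]$ is invertible, and invert $A$ after substituting $y=y_k$ in (\ref{kernel}) to obtain
\[
\phi_j(x) = \sum_{k=1}^N c_{jk}\,\frac{g(x)-g(y_k)}{x-y_k}\,\widehat{f'}(y_k-x),
\]
with scalars $c_{jk}$; the Fourier-dual equation (\ref{Fkernel}) gives an analogous representation of $\hat\phi_j(\xi)$ in terms of $f$ and $\widehat{g'}$. Thus analytic extension of $\phi_j$ to a strip $S_r$ follows once $g$ extends analytically to $S_r$ and $\widehat{f'}$ extends analytically to $|\text{Im}\,\xi|<2r$ (equivalently, $f'$ has exponential moments of rate $2r$).

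The central, and most delicate, step is breaking the apparent circular dependence to establish initial analyticity of $g$ and $f$. The approach exploits the rank-$N$ constraint in its strong form: the whole family $\{h_{y_0}(x):=\frac{g(x)-g(y_0)}{x-y_0}\widehat{f'}(y_0-x)\}_{y_0\in\R}$ lies in the fixed $N$-dimensional space $\spann\{\phi_1,\dots,\phi_N\}$, equivalently every $(N+1)\times(N+1)$ minor of $[K(x_i,y_j)]$ vanishes identically. Differentiating in $y_0$ gives a linear relation of order $\le N$ among the derivatives $\partial_{y_0}^m h_{y_0}$, i.e., a finite-order functional equation relating $g$ and $\widehat{f'}$; combined with the Schwartz regularity of the $\phi_j$, this should force both to extend analytically to strips of some positive width. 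Once the $\phi_j$ are analytic in $S_r$ and $\widehat{f'}$ in $|\text{Im}\,\xi|<2r$, evaluating the analytic continuation of the kernel at the pair $(z,\bar z)$ gives
\[
0 \le K(z,\bar z) = \sum_j |\phi_j(z)|^2 = \frac{1}{\sqrt{2\pi}}\cdot\frac{\text{Im}\,g(z)}{\text{Im}\,z}\cdot\widehat{f'}(-2i\,\text{Im}\,z),
\]
and since $\widehat{f'}(-2iy) = (2\pi)^{-1/2}\int f'(\xi)e^{-2y\xi}d\xi > 0$ where defined, one obtains the Herglotz inequality $\text{Im}\,g(z)\,\text{Im}\,z\ge 0$, i.e., $g\in K_r$. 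Fourier duality yields $f\in K_{r'}$, and the weighted $L^2$ estimates on $\phi_j$ then follow via Paley--Wiener from the representation formula.

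Finally, the bound $rr'\le\pi/2$ is obtained from Kato's integral representation $g(x)=\int\tanh\hat r(x-t)d\mu(t)+c$ with $\hat r=\pi/(2r)$ (now available since $g\in K_r$). This yields $g'(x)=\hat r\int\text{sech}^2(\hat r(x-t))d\mu(t)$, whose sharpest decay is $e^{-2\hat r|x|}$, so $\int g'(x)e^{2s|x|}dx<\infty$ requires $s<\hat r$. Matching against the theorem's assertion $s<r'$ yields $r'\le\hat r=\pi/(2r)$, i.e., $rr'\le\pi/2$, with Kato's rank-1 pair $g=\tanh(\hat r\,\cdot),f=\tanh(\hat r'\,\cdot)$ realizing equality. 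The main obstacle is clearly the bootstrap in the third paragraph: extracting an a priori strip of analyticity from positivity and the finite-rank constraint alone.
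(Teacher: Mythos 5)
There is a genuine gap at exactly the point you flag yourself: the existence of the strips, i.e.\ the analyticity of the $\phi_j$ (and hence of $g$ and $f$), is never proved. Your third paragraph proposes extracting a ``finite-order functional equation'' from the vanishing of $(N+1)\times(N+1)$ minors of the kernel and asserts this ``should force'' analytic continuation; nothing is carried out, and it is far from clear it can be (the $N+1$ functions $\partial_{y_0}^m h_{y_0}$ do lie in an $N$-dimensional space, but the dependence coefficients vary with $y_0$ in an uncontrolled way --- this is essentially Kato's rank-one ODE idea, which does not extend straightforwardly to rank $N$). The paper's proof does something quite different and entirely quantitative: writing $\gamma_j(x)=\frac{1}{\sqrt{2\pi}}\frac{g(x)-g(y_j)}{x-y_j}\widehat{f'}(y_j-x)$ (your representation, with a second set of base points $y_j'$ to dodge the singularities at $x=y_j$), it closes an induction on the derivative bounds $\|\phi_j^{(n)}\|_\infty,\ \|\phi_j^{(n)}\|_2\le aK^n n!$. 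The induction is fed by the two diagonal identities you wrote down, used in both directions: $g'=(2\pi/[f])\sum_j|\phi_j|^2$ controls $\|g^{(k)}\|_\infty$, while $f'=(2\pi/[g])\sum_j|\hat\phi_j|^2$ gives $\|\widehat{\xi^k f'}\|_\infty\le(\sqrt{2\pi}/[g])\sum_j\|\phi_j^{(k-l)}\|_2\|\phi_j^{(l)}\|_2$, and Stirling-type bookkeeping (choosing $l\approx(n-k)/2$ to gain a factor $2^{-(n-k)}$) lets the factorial bound propagate. This $n!$ growth bound is what yields analyticity in $S_r$ with $r=K^{-1}$; without it, or a worked-out substitute, your proof of the theorem's central assertion is missing. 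Note also that positivity is not needed for this step (the paper stresses this), whereas your sketch leans on ``positivity and the finite-rank constraint'' jointly.

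Your remaining steps do track the paper once analyticity is granted: continuing the kernel identity to $(z,\bar z)$ to get $\operatorname{Im}g(z)\operatorname{Im}z\ge 0$ (the paper's Proposition \ref{Img>0}, which also needs the exponential moment bound $\int f'(\xi)e^{s|\xi|}d\xi<\infty$ for $s<2r$, proved there from the bound on $|\widehat{\xi^nf'}(0)|$ plus monotone convergence --- you should supply this, since $\widehat{f'}(-2iy)$ must be finite), and the deduction $r'\le\hat r=\pi/(2r)$ from Kato's representation, which is the paper's argument. However, the weighted bound $\int|\phi_j(x+iy)|^2e^{2s|x|}dx<\infty$ for $|y|<r$, $s<r'$ is not simply ``Paley--Wiener from the representation formula'': the paper obtains it by applying Kato's representation at intermediate widths $r_1<r$, showing $\int e^{2s|x|}d\mu_{r_1}<\infty$ for $s<r'$, and using $\operatorname{Im}g(x+ir_1)\,\widehat{f'}(-2ir_1)=r_1\sum_j|\phi_j(x+ir_1)|^2$ to transfer the moment bound to the $\phi_j$. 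As it stands, the proposal establishes the conclusion of the theorem only conditionally on an unproven analyticity input, so it does not constitute a proof.
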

If Kato's conjecture is true, what is required is that $rr' \ge \pi/2$. (Note that $rr' >\pi/2$ implies 
that there exist $r_1 \le r $ and $r_1' \le r'$ with $g\in K_{r_1}$ and $f\in K_{r'_1}$ such that $r_1r_1' = \pi/2$ .
This is a consequence of the fact that $K_a \subset K_b$ if $a > b$).    The result that $rr'\le \pi/2$ is therefore a bit surprising.  But evidently $rr'>\pi/2$ cannot occur in the finite rank case. \

We remark that our method of proof of the analyticity cannot predict the correct size of the strips.  The positivity of the commutator is not used in our proof of analyticity but rather only to prove the positivity of the imaginary part of $g$ (or $f$) in the upper half strip of analyticity.  If we just assume the finite rank property, then the proof of analyticity still works as given here but the following is an example where the commutator has finite rank but is neither positive nor negative and has $f$ and $g$ analytic in strips:  Take $g(x) = \tanh x$ and $f(\xi) = \tanh \alpha_1 \xi + \beta \tanh \alpha_2 \xi$  where $\alpha_1 = \pi/2$, $\alpha_2 = \pi$, and $\beta$ is small and positive.  After diagonalizing the rank two operator $i[\tanh (\alpha_2 P), g(Q)]$ the rank three commutator $i[f(P),g(Q)]$ can be written 
$$ i[f(P),g(Q)] = \pi^{-1}(\phi, \cdot)\phi + (\beta/2\pi)[||\phi_{+}||^{-2} \lambda_+(\phi_+, \cdot) \phi_+ + \lambda_{-} ||\phi_{-}||^{-2} (\phi_{-},\cdot)\phi_{-} ]$$
where $$\phi(x) = (\cosh x)^{-1}, \phi_{+} = \frac{\cosh(x/2)}{\cosh x}, \phi_{-} = \frac{\sinh(x/2)}{\cosh x},$$ $$ \lambda_{\pm} = ||\phi||^2 \pm ||e^{-x/2}\phi||^2.$$
One sees $(\phi, \phi_{-}) = 0$ and thus 
$$ (\phi_{-}, i[f(P),g(Q)]\phi_-) = (\beta/2\pi)||\phi_-||^4 \lambda_{-1} < 0.$$
where we used $\lambda_- = ||\phi||^2 - ||e^{-x/2}\phi||^2  = (\phi,\phi) - (\phi,1) < 0$ .  Thus $g$ is analytic in $S_{\pi/2}$ while $f$ is analytic in $S_{1/2}$.  The product is $(\pi/2)(1/2) = \pi/4$, half the required number, but of course the commutator is not positive.  
\vspace{.2cm}

Going back to Theorem \ref{minimalrr'} we do not know how large $r$ and $r'$ are.  But what we do know is that if $f$ and $g$ are analytic and bounded in larger strips than we have estimated, then their imaginary parts are positive in the corresponding upper half strips:
\begin{proposition}\label{largerrr'}
Suppose $f$ and $g$ are as in Theorem \ref{minimalrr'}.  If $\tilde r \ge r$ and $\tilde r' \ge r'$ and $g$ and $f$ are analytic in $S_{\tilde r}$ and $S_{\tilde r'}$ respectively and bounded in any smaller strips, then $g\in K_{\tilde r}$ and $f\in K_{\tilde r'}$.  In addition $\int g'(x)e^{2s|x|} dx < \infty$ for $s < \tilde r'$ and $\int f'(\xi)e^{2s|\xi|}d\xi < \infty$ if $s < \tilde r$. 
\end{proposition}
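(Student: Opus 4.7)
The plan is to bootstrap the conclusions of Theorem \ref{minimalrr'} up to the enlarged strips $S_{\tilde r}$ and $S_{\tilde r'}$, using the kernel identity \eqref{kernel} as the engine. Introduce $\psi_j(y):=\overline{\phi_j(\bar y)}$, so that \eqref{kernel} can be written symmetrically as
\[
\sum_j \phi_j(x)\psi_j(y) \;=\; \frac{1}{\sqrt{2\pi}}\,\frac{g(x)-g(y)}{x-y}\,\widehat{f'}(y-x),\qquad x,y\in\R.
\]
If $\phi_j$ is analytic in $S_\rho$ then so is $\psi_j$, and the left side is jointly holomorphic on $S_\rho\times S_\rho$. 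Theorem \ref{minimalrr'} supplies the starting data $\phi_j$ analytic in $S_{\rho_0}$ with $\rho_0=r$ and $\widehat{f'}$ analytic in $S_{2r}$ (equivalent to $\int f'(\xi)e^{2s|\xi|}\,d\xi<\infty$ for $s<r$). At the inductive stage $n$, with $\phi_j$ analytic in $S_{\rho_n}$ and $\widehat{f'}$ in $S_{2\rho_n}$, I proceed in two half-steps. First, choose real reference points $y_1,\ldots,y_N$ at which the $N\times N$ matrix $\bigl(\overline{\phi_j(y_k)}\bigr)_{j,k}$ is invertible, invert the linear system to get $\phi_j(x)=\sum_k A_{jk}K(x,y_k)$, and observe that via the right side of the kernel identity each $K(x,y_k)$ is analytic in $x\in S_{\tilde r}\cap S_{2\rho_n}=S_{\min(\tilde r,2\rho_n)}$ (using $g$ analytic in $S_{\tilde r}$, $\widehat{f'}(y_k-x)$ analytic in $S_{2\rho_n}$, and the removable singularity of $(g(x)-g(y_k))/(x-y_k)$ at $x=y_k$ having value $1/g'(y_k)\ne 0$ by strict monotonicity). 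Hence $\phi_j$ extends to $S_{\rho_{n+1}}$ with $\rho_{n+1}:=\min(\tilde r,2\rho_n)$. Second, reading the identity as
\[
\widehat{f'}(y-x) \;=\; \sqrt{2\pi}\,\frac{x-y}{g(x)-g(y)}\sum_j\phi_j(x)\psi_j(y),\qquad x,y\in S_{\rho_{n+1}},
\]
extends $\widehat{f'}$ analytically to $\{y-x:x,y\in S_{\rho_{n+1}}\}=S_{2\rho_{n+1}}$. Iterating $\rho_{n+1}=\min(\tilde r,2\rho_n)$ from $\rho_0=r$ reaches $\tilde r$ in finitely many doublings; in the end $\phi_j$ is analytic in $S_{\tilde r}$ and $\widehat{f'}$ in $S_{2\tilde r}$, the latter being exactly $\int f'(\xi)e^{2s|\xi|}\,d\xi<\infty$ for $s<\tilde r$.

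With these full-strip extensions in hand, I specialize the identity to $y=\bar z$ with $z\in S_{\tilde r}$, $\Im z\ne 0$. Schwarz reflection ($g$ analytic in $S_{\tilde r}$ and real on $\R$) yields $g(\bar z)=\overline{g(z)}$ and $\psi_j(\bar z)=\overline{\phi_j(z)}$, and dividing by $\Im z$ gives
\[
\sum_j|\phi_j(z)|^2 \;=\; \frac{1}{\sqrt{2\pi}}\,\frac{\Im g(z)}{\Im z}\,\widehat{f'}(-2i\,\Im z).
\]
Here $\widehat{f'}(-2i\tau)=\int f'(\xi)e^{-2\tau\xi}\,d\xi$ is strictly positive for every $|\tau|<\tilde r$, because $f'\ge 0$ is not identically zero (recall $f$ is non-constant monotone increasing) and the integral converges by the weight estimate just proved. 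Since the left side is $\ge 0$, we conclude $\Im g(z)\cdot\Im z\ge 0$ throughout $S_{\tilde r}$, i.e.\ $g\in K_{\tilde r}$. The same bootstrap-and-antidiagonal argument applied to the Fourier-conjugate kernel \eqref{Fkernel}---interchanging the roles of $(x,y,\phi_j,\widehat{f'},g)$ with $(\xi,\eta,\widehat{\phi_j},\widehat{g'},f)$---then produces $\widehat{g'}$ analytic in $S_{2\tilde r'}$ (equivalently $\int g'(x)e^{2s|x|}\,dx<\infty$ for $s<\tilde r'$) together with $f\in K_{\tilde r'}$.

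The step I expect to require the most care is the handling of possible off-real-diagonal zeros of $g(x)-g(y_k)$ inside $S_{\tilde r}\times S_{\tilde r}$, which would create apparent singularities in $\phi_j(x)=\sum_k A_{jk}K(x,y_k)$. These zero sets are discrete and depend on the choice of $y_k$; because $g$ is non-constant, for any prescribed $x_0\in S_{\tilde r}$ one may perturb the reference points so that $g(y_k)\ne g(x_0)$ for every $k$, keeping the right-hand side analytic at $x_0$. Uniqueness of analytic continuation then forces the apparent singularities for other choices of $y_k$ to be removable, and the bootstrap proceeds uniformly. Making this moving-frame argument precise, together with its Fourier-side analogue involving $f(\xi)-f(\eta_k)$, is the one step where the real technical work lies.
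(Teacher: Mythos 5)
Your overall architecture (iterative doubling of the strips via the kernel identity, followed by the antidiagonal specialization $y=\bar z$ to get $\Im z\,\Im g(z)\ge 0$) is the same as the paper's, and your first half-step is essentially the paper's: there the only singularity of $x\mapsto K(x,y_k)$ is the removable one at $x=y_k$, and zeros of the \emph{numerator} $g(x)-g(y_k)$ are harmless (also, the removable value is $g'(y_k)$, not $1/g'(y_k)$, and its nonvanishing is irrelevant for this direction). The genuine gap is in your second half-step, where you invert the identity and divide by $g(x)-g(y)$ with \emph{both} variables complex. To define the continuation of $\widehat{f'}$ on $S_{2\rho_{n+1}}$ you must show (i) that $\sqrt{2\pi}\,\frac{x-y}{g(x)-g(y)}\sum_j\phi_j(x)\psi_j(y)$ depends only on $y-x$ off the real axis (e.g.\ by showing $(\partial_x+\partial_y)$ of it vanishes on the real slice and propagating by the identity theorem, plus connectedness of the fibers $\{y-x=w\}$ after removing the zero set); (ii) that the zero variety of $g(x)-g(y)$ can be avoided on each fiber, which in particular requires ruling out that $g(\cdot+w)\equiv g(\cdot)$ for some nonreal $w$ in the strip (this is where the boundedness-in-substrips hypothesis must enter, via a Liouville argument); your closing paragraph locates the ``hard step'' in the wrong place and so none of this is addressed. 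Separately, your assertion that analyticity of $\widehat{f'}$ in $S_{2\tilde r}$ ``is exactly'' $\int f'(\xi)e^{2s|\xi|}d\xi<\infty$ for $s<\tilde r$ is only trivial in one direction; the direction you need (and which you also use to claim $\widehat{f'}(-2i\tau)>0$ in the antidiagonal step) requires positivity of $f'$ together with a Taylor-coefficient/monotone-convergence argument of the type in Lemma \ref{exponential decay} (a Lukacs/Landau-type fact for Fourier transforms of positive functions), which you do not supply.

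For comparison, the paper's proof of Lemma \ref{expintegrability} performs the second half-step differently and thereby avoids both problems: having continued $\phi_j$ to $S_{r_1}$, bounded and in $L^2$ on substrips (this is where boundedness of $g$ in smaller strips is used), it shifts contours via Cauchy's theorem to get $\widehat{\phi_{j,y}}(\xi)=e^{-y\xi}\hat\phi_j(\xi)$, hence $e^{s|\xi|}\hat\phi_j\in L^2$ for $s<r_1$, and then the identity $f'(\xi)=(2\pi/[g])\sum_j|\hat\phi_j(\xi)|^2$ yields directly $\int f'(\xi)e^{s|\xi|}d\xi<\infty$ for $s<2r_1$; no division by $g(x)-g(y)$ ever occurs, and the exponential-moment conclusion (which is part of the proposition's statement) is produced at each stage rather than inferred afterwards from analyticity of $\widehat{f'}$. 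Your route could be repaired along the lines sketched above, but as written it has these two missing arguments, and the mechanism you replace is precisely the part of the paper's proof that does the real work.
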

 
The fact that analyticity and boundedness of $g$ in a larger strip also implies $\text{Im}z\text{Im}g(z)\ge 0$ in this larger strip may also be special to the finite rank case.  It is not required by Kato's conjecture.  (Kato claims that a specific function given in his paper illustrates that one may have $f \in K_r$ and $f$ analytic and bounded in $S_{r_1}$ with $r_1 > r$ but $f \notin K_{r_1}$.  But I do not believe the example has this property.)

Let us define maximality:  If $g \in K_a$ and in no strictly smaller $K_b$ ($b>a$) then we say that $K_a$ is maximal (for $g$).  We will always be talking about non-constant functions.

\begin{conjecture}
Suppose $f \in K_a, g\in K_b$ with both $K_a$ and $K_b$ maximal for $f$ and $g$ respectively.  If  $i[f(P),g(Q)] \ge 0$, then $ab\ge \pi/2$. 
\end{conjecture}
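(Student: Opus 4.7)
The plan is to use Kato's representation of $K_a$-functions to factor the kernel \eqref{kernel} as a Schur product of three kernels, and then to read off the constraint $ab\ge\pi/2$ from the explicit Fourier transform of the one factor that depends on both parameters. Writing $\hat a = \pi/(2a)$, $\hat b = \pi/(2b)$ and applying Kato's representation theorem,
\[
f(\xi) = c' + \int \tanh\!\bigl(\hat a(\xi-s)\bigr)\,d\nu(s), \qquad
g(x) = c + \int \tanh\!\bigl(\hat b(x-t)\bigr)\,d\mu(t),
\]
with $\mu,\nu$ finite positive Borel measures; maximality of $K_a$ and $K_b$ forces $\hat a$ and $\hat b$ to be the smallest parameters for which such representations exist. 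Substituting into \eqref{kernel} and using the identities $\tanh\alpha - \tanh\beta = \sinh(\alpha-\beta)/(\cosh\alpha\cosh\beta)$ and $\widehat{\mathrm{sech}^2}(k) = \pi k/\sinh(\pi k/2)$, the kernel factors as
\[
K(x,y) \;=\; \frac{\pi}{\sqrt{2\pi}\,\hat a}\, M_\gamma(x-y)\, \hat\nu(y-x)\, L_\mu(x,y), \qquad \gamma := \frac{2\hat a\hat b}{\pi} = \frac{\pi}{2ab},
\]
where $M_\gamma(u) = \sinh(\hat b u)/\sinh(\pi u/(2\hat a))$ and $L_\mu(x,y) = \int \mathrm{sech}(\hat b(x-t))\,\mathrm{sech}(\hat b(y-t))\,d\mu(t)$.

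Of the three factors, $L_\mu$ is a superposition of positive rank-one kernels and $\hat\nu(y-x)$ is positive semi-definite by Bochner's theorem applied to the positive measure $\nu$, so both are positive semi-definite kernels. The classical identity
\[
\int_{-\infty}^{\infty} e^{-ikv}\,\frac{\sinh(\gamma v)}{\sinh(v)}\,dv = \frac{\pi\sin(\pi\gamma)}{\cos(\pi\gamma)+\cosh(\pi k)} \qquad (|\gamma|<1)
\]
shows that, after a trivial rescaling, the Fourier transform of $M_\gamma$ is nonnegative exactly when $|\gamma|\le 1$, with $M_1\equiv 1$ the boundary case; for $\gamma>1$ the function $M_\gamma$ grows exponentially and is not even a tempered distribution. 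Since $\gamma\le 1$ is equivalent to $ab\ge\pi/2$, it suffices to prove that positivity of the Schur product $K$ forces $M_\gamma$ itself to be a positive semi-definite kernel in $x-y$.

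This inversion is the principal obstacle: Schur's product theorem gives only one direction, so a non-positive $M_\gamma$ could in principle still produce a positive product when multiplied by the positive semi-definite factors $\hat\nu$ and $L_\mu$. My proposed attack is to probe $K$ with translated vectors $\psi_X(x)=\psi(x-X)$ and examine the limit $X\to\infty$: asymptotically, $L_\mu(x+X,y+X)$ collapses to a rank-one kernel proportional to $e^{-\hat b x}e^{-\hat b y}$ (with scalar governed by the exponential moments of $\mu$), which may be absorbed into the test vector, leaving a bilinear form in $M_\gamma(x-y)\,\hat\nu(y-x)$; a dual translation on the Fourier side via \eqref{Fkernel} strips off $\hat\nu$ similarly, reducing positivity of $K$ to positivity of $M_\gamma$ alone. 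Maximality of $K_a$ and $K_b$ would enter to rule out degenerate configurations of $\mu$ and $\nu$ that obstruct the limits, and the general (possibly infinite-rank) setting will require exponential-moment estimates in the spirit of Proposition \ref{largerrr'}. I expect the rigorous justification of this reduction, and in particular its extension beyond the finite-rank case, to be the main technical difficulty.
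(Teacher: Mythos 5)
The statement you are addressing is presented in the paper as a \emph{Conjecture}: the paper gives no proof of it, and indeed its main theorem in the finite-rank case establishes only the opposite-looking bound $rr'\le\pi/2$ for the strips it can construct, with the author explicitly stating that the size of the maximal strips is unknown and that the conjecture, if true, ``would settle the case of finite rank positive commutators.'' So there is no proof in the paper to compare yours with; the only question is whether your argument stands on its own, and by your own account it does not yet.

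Your factorization is the right mechanism for seeing where $\pi/2$ should come from: using Kato's representation, the kernel \eqref{kernel} does split (up to constants) into $M_\gamma(x-y)\,\hat\nu(y-x)\,L_\mu(x,y)$ with $\gamma=\pi/(2ab)$, and the Fourier transform of $\sinh(\gamma v)/\sinh v$ indeed changes character exactly at $\gamma=1$. But the step that would actually prove the conjecture --- deducing from $K\ge 0$ that the single factor $M_\gamma$ is positive definite, hence $\gamma\le 1$ --- is precisely the step you have not supplied, and Schur's product theorem runs only in the useless direction. The translation-limit reduction is a plan rather than an argument: the collapse of $L_\mu(x+X,y+X)$ to a rank-one kernel needs exponential-moment control of $\mu$ that is not automatic for an arbitrary finite positive measure, and even granting it you are left with positivity of $M_\gamma(x-y)\hat\nu(y-x)$, from which positivity of $M_\gamma$ still does not follow: if $\nu$ has a density with rapidly decaying Fourier transform, $\hat\nu$ can swamp the exponential growth of $M_\gamma$ when $\gamma>1$, and ruling out positive-definiteness of the product then requires a genuine use of maximality of $K_a$, which your sketch invokes only in passing; the ``dual translation on the Fourier side'' meant to strip off $\hat\nu$ is not defined. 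In short, you have correctly located the heart of the problem, but the essential implication is missing, so this remains a research program --- consistent with the statement's status in the paper as an open conjecture rather than a theorem.
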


This is implied by the Kato conjecture and if true would settle the case of finite rank positive commutators.  

\begin{proof} [Proof of Theorem \ref {minimalrr'}] 

Define the vector $v(x) = <\phi_1(x),...,\phi_N(x)>$ in $\mathbb{C}^N$.  Note that the set $\{v(y): y\in \mathbb R\}$ spans $\mathbb {C}^N$. Otherwise there is a non-zero vector $w \in \mathbb{C}^N$ orthogonal to all $v(x), x\in \mathbb{R}$ in which case $\sum \overline{w_j}\phi_j(x) = 0 $ for all $x \in \mathbb{R}$  which contradicts linear independence. 
Let $\{v(y_1),...,v(y_N)\}$ be a set of linearly independent vectors in $\mathbb{C}^N$.  And define (with $(\cdot,\cdot)$ the inner product in $\mathbb{C}^N$),

$$\gamma_j(x) = \frac{1}{\sqrt{2\pi}}\frac{g(x)-g(y_j)}{x-y_j}\hat{f'}(y_j-x) = (v(y_j),v(x)) = \sum_k\phi_k(x)\overline {\phi_k}(y_j)$$

The matrix $M_{kj} = \overline{\phi_k(y_j)}$ is invertible since $\sum_j \overline{M_{kj}}c_j = 0$ for a non-zero vector $c$ implies $\sum _j c_jv(y_j) = 0$ which is not possible since the $v(y_j)$ are linearly independent. Thus is is easy to go back and forth between $\gamma_j$ and $\phi_j = \sum_k\gamma_k M^{-1}_{kj}$.  We will need a disjoint set of N $y_j$'s, call them $y_j'$ such that the $v(y_j')$ are also linearly independent.  Such a set can be found since $v(y)$ is continuous.  Thus we have both $\gamma_j$ and $\tilde{\gamma_j}$ with the latter constructed using the $y_j'$. The technical point is to deal with the apparent singularity when $x=y_j$.   
We proceed by estimating derivatives.  According to [3], $f'$, $g'$, and the $\phi_j$'s are in $\mathcal{S}(\R)$. Consider 

$$\gamma_j^{(n)}(x)  = \frac{1}{\sqrt{2\pi}}\sum_{k=0}^n \binom{n}{k}\Big(\frac{g(x) - g(y_j)}{x-y_j}\Big)^{(k)} i^{(n-k)} \widehat {\xi^{n-k}f'}(y_j-x)$$

We have $(g(x) - g(y))/(x-y) = \int_0^1 g'(x + t(y-x)) dt$.  Thus 

$$(d^k/dx^k)\frac{g(x) - g(y)}{x-y} = \int_0^1 (1-t)^k g^{(k+1)}(x +t(y-x))dt.$$  
We integrate by parts to find 

$$\int_0^1 (1-t)^k g^{(k+1)}(x + t(y-x))dt = (y-x)^{-1}\int _0^1 (g^{(k)}(x+t(y-x))-g^{(k)}(x)) k(1-t)^{k-1}dt$$ unless $k=0$ in which case we are back to $(g(y) - g(x))/(y-x)$. (Note the above even makes some sense when $k = 0$ since as $k\to 0 , k(1-t)^{k-1}$ approaches a delta function at $1$.)   
Thus we have

$$|\big((g(y) - g(x))/(x-y)\big)^{(n)}| \le 2|y-x|^{-1} ||g^{(n)}||_\infty.$$
$$||\big((g(y) - g(x))/(x-y)\big)^{(n)}1_{|x-y| >b}||_2 \le (2\sqrt{2}) b^{-1/2} ||g^{(n)}||_\infty.$$
We have 

$$|\gamma_j^{(n)}(x)| \le \frac{1}{\sqrt{2\pi}}2|x-y_j|^{-1} \sum_{k\le n}\binom{n}{k} ||g^{(k)}||_{\infty}||\widehat{\xi^{n-k}f'}||_\infty $$

In computing $||\phi_k^{(n)}||_\infty$ we can use $\gamma_j$ or $\tilde \gamma_j$.  Thus we must bound the minimum of $\sum_j |x-y_j|^{-1}$ and $\sum_j |x-y'_j|^{-1}$.  If  $S = \{y_1,...y_N \}$ and  $S' = \{y'_1,...y'_N \}$, we have $|x-y_j| \ge d(S, x)$ thus $\sum_j |x-y_j|^{-1} \le N/d(S, x)$.  Hence the minimum of the two sums is $\le N \min \{d(S, x)^{-1}, d(S', x)^{-1}\} \le 2Nd(S,S')^{-1}$. Thus we have 

\begin{equation} \label{phibound}
||\phi_j^{(n)}||_\infty \le C \sum_{k\le n}\binom{n}{k} ||g^{(k)}||_{\infty}||\widehat{\xi^{n-k}f'}||_\infty
\end{equation}
\begin{align*}
    &||\gamma_j^{(n)}1_{d(S, x)>b}||_2 \le ||\gamma_j^{(n)}1_{|x-y_j| >b}||_2 \le \\ \nonumber
&(2\sqrt{2}) b^{-1/2}\sum_{k\le n}\binom{n}{k} ||g^{(k)}||_\infty ||\widehat{\xi^{n-k}f'}||_\infty
\end{align*}
Since the intersection $\{d(S, x) \le b\}\cap d(S', x) \le b\}$ is null for small enough $b$, we have 
\begin{equation}
||\phi_j^{(n)}||_2  \le C\sum_{k\le n}\binom{n}{k} ||g^{(k)}||_\infty ||\widehat{\xi^{n-k}f'}||_\infty
\end{equation}\label{phiL2bound}

Using the equation $g'(x) = (2\pi/[f])\sum_j |\phi_j(x)|^2$ (with $[f] = \lim_{x\to\infty}(f(x) - f(-x))$) we obtain
\begin{equation}\label{gbounds}
||g^{(l+1)}||_\infty \le (2\pi/[f]\sum_j\sum _k \binom {l}{k} ||\phi_j^{(l-k)}||_\infty ||\phi_j^{(k)}||_\infty.
\end{equation}
$$||g^{(l+1)}||_1\le (2\pi/[f]\sum_j\sum _k \binom {l}{k} ||\phi_j^{(l-k)}||_2||\phi_j^{(k)}||_2.$$
From (\ref{Fkernel}) we obtain 
$$f'(\xi) = (2\pi/[g])\sum_j|\widehat{\phi_j}(\xi)|^2.$$
Taking the Fourier transform we get 
$$\widehat{f'}(y-x) = (\sqrt{2\pi}/[g])\sum_j\int\phi_j(u+x)\overline{\phi_j(u+y)}du$$
After differentiating $k$ times and integrating by parts $l\le k$ times we learn that 
$$d^k/dx^k \widehat {f'}(y-x) = (-1)^l(\sqrt{2\pi}/[g]) \sum_j \int\phi_j^{(k-l)}(x+ u) \overline {\phi_j^{(l)}}(y+u)du$$ 
so that 

\begin{equation} \label{f'bound}
 ||\widehat {\xi^kf'}||_\infty \le (\sqrt{2\pi}/[g]) \sum_j ||\phi_j^{(k-l)}||_2 ||\phi_j^{(l)}||_2.
 \end{equation}

Inductively assume 

\begin{equation}\label{induction}
||\phi_j^{(l)}||_\infty \le a K^l l! \ \text{for} \ l\le n-1 \ \text{and the same estimate for} \ ||\phi_j^{(l)}||_2. 
\end{equation}

\flushleft Using the estimate (\ref{f'bound}) for $\widehat {\xi^{n-k}f'}$ and the inductive assumption (\ref{induction}) we have for $l\le n-k$, 

\begin{equation}\label{f'bound2}
||\widehat {\xi^{n-k}f'}||_\infty \le (2\pi/[g])N a^2 K^{n-k}l!(n-k-l)!.
\end{equation}
We use the $n!$ bounds 

$$\sqrt{2\pi n}(n/e)^n < n! < \sqrt{2\pi n}(n/e)^ne^{1/12n}$$.\
\flushleft in (\ref{f'bound2}). For $n-k$ even we take $l = (n-k)/2$. Otherwise take $l = (n-k -1)/2$. We obtain

\begin{equation} \label{f'bound3}
||\widehat {\xi^{n-k}f'}||_\infty \le K^{n-k}(2\pi/[g])N a^2 C_0 \sqrt{(n-k +1)}(n-k)! /2^{n-k} 
\end{equation} \

where $C_0$ is independent of $n$ and $k$. 

From (\ref{gbounds}) and (\ref{induction}) we have for $k\ge 1$

\begin{equation}\label{gbounds2}
||g^{(k)}||_\infty \le (2\pi/[f])Na^2K^{k-1}k!
\end{equation} \

We substitute the bounds (\ref{gbounds}) and (\ref{f'bound3}) into (\ref{phibound}) using the inductive assumption (\ref{induction}). We separate out the $k=0$ term below.  Let $C_1 = CC_0N(2\pi/[g])$, and $C_2 = (2\pi/[f])N C_1$. We have \
\begin{align*}
&||\phi_j^{(n)}||_\infty \le C_1a^2n!\sum_{k\le n}||g^{(k)}||_\infty\sqrt{n-k+1}(K/2)^{n-k}/k!\\
& \le C_2a^4K^{n-1}n! \sum_{1\le k\le n} \sqrt{n-k+1}/2^{n-k} + C_1a^2||g||_\infty K^n n!\sqrt{n+1}/2^n \\
&\le C_3a^4K^{n-1}n! + C_1a^2||g||_\infty K^n n!\sqrt{n+1}/2^n
\end{align*}
where $C_3 = C_2\sum_{m=0}^\infty \sqrt{m+1}/2^m$.
Choose $a$ so that $||\phi_j||_\infty \le a$, $||\phi_j||_2 \le a$.  (Note (\ref{phibound}) for $||\phi_j||_\infty$.) Choose $n_0$ so that $C_1a||g||_\infty2^{-n}\sqrt{n+1} \le 1/2$ for $n \ge n_0$.  Choose $K_0 >1 $ so that $||\phi_j^{(n)}||_\infty \le aK_0^n n!$, and $||\phi_j^{(n)}||_2 \le aK_0^nn! $ for $n\le n_0$.  Then for any $K\ge K_0$ and $K^{-1}C_3a^3 \le 1/2$, $||\phi_j^{(n)}||_\infty \le a K^n n!$ for all $n$.  Similarly for $||\phi_j^{(n)}||_2$. Thus $\phi_j$ has an analytic continuation to the strip $S_r$ with $r = K^{-1}$, bounded and in $L^2$ of any smaller strip.

\begin{lemma} \label{exponential decay}
Let $r =K^{-1}$ as above.  Then if $s<2r$, $\int e^{s|\xi|}f'(\xi) d\xi < \infty$.
\end{lemma}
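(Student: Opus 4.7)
The plan is to reduce the exponential integrability of $f'$ to an exponential $L^2$-statement about the Fourier transforms $\widehat{\phi_j}$, via the identity
$$f'(\xi) = \frac{2\pi}{[g]}\sum_{j=1}^N |\widehat{\phi_j}(\xi)|^2$$
already recorded in the proof of Theorem \ref{minimalrr'}. Given this, it suffices to prove that
$$\int |\widehat{\phi_j}(\xi)|^2 e^{2s'|\xi|}\,d\xi < \infty \qquad \text{for every } s' < r,$$
since writing $s = 2s'$ then covers every $s<2r$.

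The inductive bounds $\|\phi_j^{(n)}\|_2 \le aK^n n!$ established just above imply that $\phi_j$ extends analytically to $S_r$ with $\phi_j(\cdot + iy)\in L^2(\R)$ for each $|y|<r$, uniformly in $L^2$-norm on compact subsets of $(-r,r)$. A standard Paley--Wiener contour shift, permissible because the analytic continuations lie in $L^2$ of every smaller strip and $\phi_j\in\mathcal{S}(\R)$ on the real axis (so the vertical boundary terms in Cauchy's theorem vanish in the limit), then yields
$$\int |\phi_j(x+iy)|^2\,dx = \int |\widehat{\phi_j}(\xi)|^2 e^{-2y\xi}\,d\xi, \qquad |y|<r,$$
which is the $s=0$ instance of the Remark following Theorem \ref{minimalrr'}.

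Fix $s'<r$ and apply this identity at $y=-s'$ and at $y=s'$. The first gives
$$\int_{\xi>0} |\widehat{\phi_j}(\xi)|^2 e^{2s'\xi}\,d\xi \le \int |\widehat{\phi_j}(\xi)|^2 e^{2s'\xi}\,d\xi = \|\phi_j(\cdot - is')\|_2^2 < \infty,$$
which controls the positive-$\xi$ half-line. The second similarly controls the negative-$\xi$ half-line,
$$\int_{\xi<0} |\widehat{\phi_j}(\xi)|^2 e^{-2s'\xi}\,d\xi \le \|\phi_j(\cdot + is')\|_2^2 < \infty.$$
Adding the two estimates yields $\int |\widehat{\phi_j}(\xi)|^2 e^{2s'|\xi|}\,d\xi < \infty$, and substituting into the formula for $f'$ completes the proof.

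The only real work is the Paley--Wiener identity, and it is entirely routine given the inductive $L^2$ bounds; there is no conceptual obstacle here, so this lemma is essentially a corollary of the analyticity already obtained in Theorem \ref{minimalrr'}. The factor of $2$ in the conclusion $s<2r$ (as opposed to $s<r$) is, as one should expect, a direct consequence of the appearance of the square $|\widehat{\phi_j}|^2$ rather than $|\widehat{\phi_j}|$ in the formula for $f'$.
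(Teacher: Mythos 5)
Your argument is correct, but it is not the route the paper takes for this lemma. The paper stays entirely on the moment side: it specializes the bound (\ref{f'bound3}) at the origin to get $|\widehat{\xi^n f'}(0)| \le C_4 (K/2)^n\sqrt{n+1}\,n!$, sums the power series for $\cosh(s\xi)$ against $f'$, and uses monotone convergence (here the positivity $f'\ge 0$ is the only structural input) to conclude $\int\cosh(s\xi)f'(\xi)\,d\xi<\infty$ for $s<2r$; no contour shifting or Fourier identity for the $\phi_j$ is needed. You instead prove the stronger statement $e^{s'|\xi|}\hat\phi_j\in L^2$ for $s'<r$ via the shift identity $\widehat{\phi_{j,y}}(\xi)=e^{-y\xi}\hat\phi_j(\xi)$ and then feed it into $f'=(2\pi/[g])\sum_j|\hat\phi_j|^2$. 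This is precisely the mechanism the paper deploys later, in the proof of Lemma \ref{expintegrability}, to bootstrap from $2r$ to $2r_1$, so your proof is consistent with the paper's toolbox but applied one stage earlier; what it buys is the explicit exponential weight on $\hat\phi_j$ (the $s=0$ case of the Remark after Theorem \ref{minimalrr'}) at the cost of having to justify a Paley--Wiener contour shift, whereas the paper's moment argument is softer and needs only the derivative bounds already in hand plus $f'\ge 0$.

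One small caveat: your stated reason for discarding the vertical boundary terms --- that $\phi_j\in\mathcal S(\R)$ \emph{on the real axis} --- is not by itself enough, since rapid decay on $\R$ does not automatically transfer to nearby horizontal lines for a function merely analytic in a strip. The fix is immediate with what you already have: the bounds $\|\phi_j^{(n)}\|_\infty\le aK^n n!$ give $|\phi_j(x+iy)|\le a/(1-K|y|)$ throughout $S_{r-\epsilon}$, so either pair against a test function $h$ with $\hat h\in C_0^\infty$ (whose rapid decay on horizontal lines kills the vertical segments, as in the paper's proof of Lemma \ref{expintegrability}), or deduce decay of $\phi_j(x+iy)$ as $|x|\to\infty$ from the uniform $L^2$ bounds on lines via the mean value property. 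With that repair the proof is complete.
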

\begin{proof} According to (\ref{f'bound3}) we have $|\widehat {\xi^n f'} (0)| \le C_4(K/2)^n \sqrt {n+1}n!$.  Thus  $$\int \sum_{n=0}^M ((s\xi)^{2n}/(2n)!) f'(\xi) d\xi \le C_4\sum_{n=0}^\infty (s/2r)^{2n}\sqrt{2n + 1}< \infty.$$  Thus by the monotone convergence theorem (since $f' \ge 0$) we have $\int \cosh(s\xi)f'(\xi)d\xi < \infty$.
\end{proof}
\begin{proposition}\label{Img>0}
Suppose the $\phi_j$'s are analytic in $|\text{Im}z| < r$ and $\int f'(\xi)e^{2s|\xi|} < \infty$ for $s<r$.  Then $(\text{Im}g )(\text{Im}z) \ge 0$ in this region.
\end{proposition}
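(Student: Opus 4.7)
The plan is to extend $g$ holomorphically to the strip $S_r$ and then evaluate an analytically continued form of the kernel identity (\ref{kernel}) at the conjugate pair $(x,y) = (z, \bar z)$. After continuation the right-hand side becomes a manifestly non-negative sum of squared moduli $\sum_j |\phi_j(z)|^2$, while the identity $g(\bar z) = \overline{g(z)}$ on the left produces $\text{Im}\,g(z)$; provided $\widehat{f'}(-2i\,\text{Im}\,z)$ is positive, the sign of $\text{Im}\,g(z)$ is forced.

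Concretely, I would first extend $g$. Taking $y \to x$ in (\ref{kernel}) gives $g'(x) = (2\pi/[f]) \sum_j |\phi_j(x)|^2$. Setting $\psi_j(z) := \overline{\phi_j(\bar z)}$ (holomorphic in $S_r$ by Schwarz reflection), the function $G(z) := (2\pi/[f]) \sum_j \phi_j(z) \psi_j(z)$ is holomorphic on $S_r$ and equals $g'$ on $\R$, so $g(z) := g(0) + \int_0^z G(w)\,dw$ along any path in $S_r$ gives a holomorphic extension; since $g$ is real on $\R$, Schwarz reflection yields $g(\bar z) = \overline{g(z)}$. Next, the hypothesis $\int f'(\xi) e^{2s|\xi|}\,d\xi < \infty$ for $s < r$ extends $\widehat{f'}$ holomorphically to $|\text{Im}\,\eta| < 2r$, and for $0 < |v| < r$ we obtain the strictly positive expression
$$\widehat{f'}(-2iv) = \frac{1}{\sqrt{2\pi}} \int e^{-2v\xi}\, f'(\xi)\,d\xi > 0,$$
because $f' \ge 0$ is non-trivial and the integrand is positive throughout.

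With these extensions in hand, I rewrite (\ref{kernel}) as $\tfrac{1}{\sqrt{2\pi}}\tfrac{g(x)-g(y)}{x-y}\widehat{f'}(y-x) = \sum_j \phi_j(x) \psi_j(y)$. Both sides are jointly holomorphic on $S_r \times S_r$ (the apparent singularity at $x=y$ is removable), they agree on the totally real submanifold $\R \times \R$, and hence agree throughout $S_r \times S_r$ by applying the one-variable identity theorem twice (fix $x$ real and extend in $y$, then fix $y$ complex and extend in $x$). Substituting $x = z$ and $y = \bar z$ and using $g(\bar z) = \overline{g(z)}$ together with $\psi_j(\bar z) = \overline{\phi_j(z)}$ turns the identity into
$$\frac{1}{\sqrt{2\pi}} \cdot \frac{\text{Im}\,g(z)}{\text{Im}\,z} \cdot \widehat{f'}(-2i\,\text{Im}\,z) \;=\; \sum_j |\phi_j(z)|^2 \;\ge\; 0,$$
which, since $\widehat{f'}(-2i\,\text{Im}\,z) > 0$, forces $(\text{Im}\,g(z))(\text{Im}\,z) \ge 0$.

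The chief obstacle is the care required to carry out the joint analytic continuation of the kernel identity across $S_r \times S_r$ and to justify the interaction between complex conjugation and the holomorphic extension of $\phi_j$ via the reflected functions $\psi_j$. The key insight is the conjugate-pair substitution $(x,y) = (z,\bar z)$: it simultaneously symmetrizes the right-hand side into a sum of squared moduli and, through Schwarz reflection of $g$, extracts exactly the quantity $\text{Im}\,g(z)$ on the left.
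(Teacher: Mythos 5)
Your proposal is correct and follows essentially the same route as the paper: continue $g$ via $g'=(2\pi/[f])\sum_j|\phi_j|^2$, continue the kernel identity (\ref{kernel}) in both variables into $S_r\times S_r$ (the exponential integrability of $f'$ giving $\widehat{f'}$ analytic and positive at $-2i\,\text{Im}\,z$), and evaluate at the conjugate pair $(z,\bar z)$ to get $\text{Im}\,g(z)/\text{Im}\,z$ equal to a non-negative sum of squared moduli. Your explicit use of the reflected functions $\psi_j(z)=\overline{\phi_j(\bar z)}$ and the two-step identity theorem just makes precise what the paper states as "analytically continue in both variables."
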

\begin{proof}
From (\ref{kernel}) we have $g'(x) = (2\pi/[f])\sum_j|\phi_j(x)|^2$.  It follows that $g'$ and thus $g$ has an analytic continuation to the strip $S_r$.
We have for $x_1$ and $x_2$ real $$(1/\sqrt{2\pi})\frac{g(x_1) - g(x_2)}{x_1-x_2}\widehat{f'}(x_2 - x_1) = \sum_j \phi_j(x_1)\widebar{\phi_j(x_2)}.$$
If we analytically continue to the strip $|\text{Im}z| <r$ in both variables $x_1$ and $x_2$ we have 
$$(1/\sqrt{2\pi})\frac{g(z) - g(w)}{z-w}\widehat{f'}(w - z) = \sum_j \phi_j(z)\widebar{\phi_j}(\widebar{w}).$$
We set $z=x+iy, w = x-iy$ giving
$$(1/\sqrt{2\pi})\frac{g(x+iy) - g(x-iy)}{2iy}\widehat{f'}(-2iy) = \sum_j \phi_j(x+iy)\overline{\phi_j(x+iy)}.$$
Since $0 < \widehat{f'}(-2iy) < \infty$, $y\text{Im}g(x+iy) \ge 0$.
\end{proof}

Let us complete the proof of Theorem \ref{minimalrr'}.  We notice that because of the symmetry between $f$ and $g$ we have the existence of an $r'>0$ so that $\hat\phi_j$ and $f$ have  analytic continuations to $S_{r'}$ with $f\in K_{r'}$ and $\int g'(x)e^{2s|x|} dx <  \infty$ when $s<r'$. 

From Kato's representation theorem there is a finite positive measure $\mu_r$ such that 

$$g(x) = \int\tanh\hat r (x-t) d\mu_r(t) + c_r $$ where $\hat r = \pi/2r$.

Thus $g'(x) = \hat r\int (\cosh(\hat r(x-t))^{-2} d\mu_r(t)$ and $$\int g'(x)e^{2sx} dx = \hat r\int(\cosh(\hat rx))^{-2}e^{2sx}dx \int e^{2st} d\mu_r(t) < \infty $$ for $|s| < r'$.  This implies $r'\le \hat r$ and $\int e^{2s|t|} d\mu_r(t)$
for $s<r'$.

Of course since $g\in K_{r_1}$ if $r_1 < r$  we can also write $g(x) = \int \tanh \hat r_1(x-t) d\mu_1(t) +c_{r_1} $.  We have

$$w*\lim_{y\uparrow r} \text{Im}g(x+iy)dx = 2rd\mu_r(x)$$.  

We learn that $\int e^{2s|x|} d\mu_{r_1}< \infty$ if $s< r'$.  If we take $0<r_1 < r$ we have as before $\text{Im}g(x+ir_1) \hat f'(-2ir_1) = r_1 \sum_j|\phi_j(x+ir_1)|^2$

Thus $$\infty > \int e^{2s|x|}d\mu_{r_1}(x) = r_1\sum_j(2r_1)^{-1}\int e^{2s|x|}|\phi_j(x+ir_1)|^2 dx(\hat{f'}(-2ir_1))^{-1}$$ 

for $s< r'$.  By varying $r_1$ we learn that 

$$\int|\phi_j(x+iy)|^2 e^{2s|x|}dx < \infty ; |y| <r, s < r'.$$

 Although this is not part of Theorem \ref{minimalrr'}, using the three lines lemma with $F(z) = (\psi , e^{s(z+\cdot)}\phi_j(z + \cdot))e^{-\epsilon z^2}$ with $\psi \in C_0^\infty$ shows that the above estimate is uniform for $|y| < r-\epsilon$ and $s < r' - \epsilon$.  This completes the proof of Theorem \ref{minimalrr'}.
\end{proof}

\begin{proof}[Proof of Proposition \ref{largerrr'} ]

\begin{lemma} \label{expintegrability}
Let $t>r =K^{-1}$ and $g$ analytic in $S_t = \{z: |\text{Im} z| < t\}$ and bounded in any smaller strip.    Then $\text{Im}z\text{Im}g(z) \ge 0$ in $S_t$.
\end{lemma}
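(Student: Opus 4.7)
The plan is a bootstrap argument that extends the analyticity strip of the $\phi_j$'s in stages of doubling width until it meets the given strip $S_t$, after which the positivity $\text{Im}\,z\,\text{Im}\,g(z)\ge 0$ is read off from the identity already used in Proposition~\ref{Img>0}. Set $r_0 = r = K^{-1}$ and $r_{n+1} = \min(t, 2r_n)$; this sequence reaches $t$ after $N := \lceil\log_2(t/r)\rceil$ steps. The inductive claim is that $\phi_j$ is analytic in $S_{r_n}$ with $\sup_{|y|<r_n-\epsilon}\|\phi_j(\cdot+iy)\|_2<\infty$ for every $\epsilon>0$. The base case $n=0$ is Theorem~\ref{minimalrr'} together with the three-lines remark concluding its proof.

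For the inductive step, $L^2$-Paley--Wiener applied to the inductive hypothesis gives $\int|\widehat{\phi_j}(\xi)|^2 e^{2s|\xi|}\,d\xi<\infty$ for $s<r_n$; combining this with $f'(\xi) = (2\pi/[g])\sum_j|\widehat{\phi_j}(\xi)|^2$ yields $\int f'(\xi) e^{2s|\xi|}\,d\xi<\infty$ for $s<r_n$, so $\widehat{f'}$ extends analytically to $|\text{Im}\,\eta|<2r_n$ and $\widehat{f'}(-2iy)=(2\pi)^{-1/2}\int e^{-2y\xi}f'(\xi)\,d\xi>0$ for real $y$ with $|y|<r_n$. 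The representation
\[
\phi_j(z) = \sum_k \gamma_k(z) M^{-1}_{kj},\qquad \gamma_k(z) = \frac{g(z)-g(y_k)}{\sqrt{2\pi}\,(z-y_k)}\widehat{f'}(y_k-z),
\]
then defines an analytic extension of $\phi_j$ to $S_{\min(t,2r_n)} = S_{r_{n+1}}$, since $g$ is analytic on $S_t$ and $\widehat{f'}$ on $S_{2r_n}$. The same two-set cancellation argument as in Theorem~\ref{minimalrr'}, using $\{y_k\}$ and the disjoint $\{y'_k\}$, extends $L^2$ control to every horizontal line $\text{Im}\,z=y$ with $|y|<r_{n+1}$, completing the induction. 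After $N$ rounds we have $\phi_j$ analytic in $S_t$ with the $L^2$ bounds, and one further application of Paley--Wiener plus the $f'$ identity extends $\widehat{f'}$ to $|\text{Im}\,\eta|<2t$. Analytically continuing the two-variable identity
\[
\frac{1}{\sqrt{2\pi}}\frac{g(z)-g(w)}{z-w}\widehat{f'}(w-z) = \sum_j \phi_j(z)\overline{\phi_j(\bar w)}
\]
from $S_r\times S_r$ to $S_t\times S_t$ by the identity theorem and specializing to $w=\bar z = x-iy$ with $|y|<t$ yields
\[
\frac{\text{Im}\,g(x+iy)}{y}\,\widehat{f'}(-2iy)\,\frac{1}{\sqrt{2\pi}} = \sum_j|\phi_j(x+iy)|^2 \ge 0,
\]
and since $\widehat{f'}(-2iy)>0$, the conclusion $\text{Im}\,z\,\text{Im}\,g(z)\ge 0$ on $S_t$ follows.

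The main technical obstacle is the uniform $L^2$-bound on $\phi_j(\cdot+iy)$ in every substrip of $S_{r_{n+1}}$. Each individual $\gamma_k(x+iy)$ decays only like $|x|^{-1}$ at infinity and is therefore not in $L^2(\mathbb{R})$; square-integrability of $\phi_j$ emerges only after noting that for every $z$ in the strip, $\text{Re}\,z$ is far from at least one of the two disjoint sets $\{y_k\}$, $\{y'_k\}$, so that one can locally choose the representation in which the apparent singularities cancel. The argument transfers from the real axis used in Theorem~\ref{minimalrr'} to horizontal lines once one notes $|z-y_k|\ge|\text{Re}\,z - y_k|$, and this is the sole nontrivial point in the entire induction.
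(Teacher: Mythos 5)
Your proposal follows essentially the same route as the paper: the same bootstrap $r_{n+1}=\min(t,2r_n)$ using the representation $\phi_j=\sum_k\gamma_kM^{-1}_{kj}$ to extend analyticity, conversion of $L^2$ bounds on horizontal lines into exponential weights on $\widehat{\phi_j}$ (the paper does this by an explicit Cauchy contour shift, you cite $L^2$ Paley--Wiener, which is the same content), the identity $f'=(2\pi/[g])\sum_j|\widehat{\phi_j}|^2$ to push $\widehat{f'}$ to the strip $|\mathrm{Im}\,\eta|<2r_{n+1}$, and finally the two-variable continuation with $w=\bar z$ as in Proposition \ref{Img>0}. One remark in your closing paragraph is mistaken, though not fatally: a function that is bounded near the $y_k$ and decays like $|x|^{-1}$ at infinity \emph{is} in $L^2(\mathbb{R})$, so each $\gamma_k(\cdot+iy)$ is individually square integrable; moreover the two-set device cannot improve decay at infinity (both representations decay like $|x|^{-1}$ there), and its only role -- here as in Theorem \ref{minimalrr'} -- is to keep the bounds uniform near the points $y_k$, where the crude estimate $2\|g\|_\infty/|z-y_k|$ degenerates (alternatively one can note that the singularity of $(g(z)-g(y_k))/(z-y_k)$ at $z=y_k$ is removable and use local bounds on $g'$). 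With that correction the $L^2$ control you need on each line $\mathrm{Im}\,z=y$, $|y|<r_{n+1}-\epsilon$, does hold, and the rest of your argument matches the paper's proof.
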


\begin{proof}
As we saw in Proposition \ref{Img>0} and Lemma \ref{exponential decay} we know that for $r = K^{-1}$,  $g$ is analytic in $S_{r}$ and $\int f'(\xi) e^{s|\xi|}d\xi < \infty $ if $s < 2r$. We have 
\begin{equation}\label{gamma_j}
    \gamma_j(x) = \frac{g(x) - g(y_j)}{x-y_j}\widehat{f'}(y_j-x).
\end{equation} 
We can analytically continue the right hand side into $S_{r_1}$ where $r_1 = \min\{t,2r\}$.  Thus $\phi_j$ is analytic in $S_{r_1}$ and is bounded in $S_{r_1 -\epsilon}$ with $\int |\phi_j(x+iy)|^2 dx \le C_\epsilon$ for all $|y| \le r_1 - \epsilon$.  It also follows that $|\phi_j(x+iy)| \rightarrow 0$ as $|x| \to \infty$ uniformly for $|y| < r_1 -\epsilon$.  Let $\hat{h}\in C_0^{\infty}(\mathbb{R})$.  By Cauchy's  theorem we have 
$$0 = \int_{-L}^L\bar{h}(x)\phi_j(x+iy)dx - \int_{-L}^L \bar{h}(x+iy)\phi_j(x)dx   $$
$$+  i\int_0^y[\bar{h}(-L+iu)\phi_j(-L -i(u-y)) -\bar{h}(L+iu)\phi_j(L -i(u-y)) ]du.$$
The integrals from $0$ to $y$ vanish in the limit $L\to \infty$ so that 
$$\int\bar{h}(x)\phi_j(x+iy)dx = \int \bar{h}(x+iy)\phi_j(x)dx   $$ or writing $\phi_{j,y}(x) = \phi_j(x+iy)$ and using the Plancherel theorem
$$\int\bar{\hat{h}}(\xi)\widehat{\phi_{j,y}}(\xi)\d\xi = \int\bar{\hat{h}}(\xi)e^{-y\xi}\hat{\phi_j}(\xi)d\xi.$$  It follows that 
$$\widehat{\phi_{j,y}}(\xi)=e^{-y\xi}\hat{\phi_j}(\xi).$$
Since this holds for $|y|< r_1$ we see that in particular $ e^{s|\xi|}\hat{\phi_j} \in L^2$ for $s < r_1$. Since 
$$f'(\xi) = (2\pi/[g])\sum_j|\widehat{\phi_j}(\xi)|^2$$ it follows that $\int f'(\xi)e^{s|\xi|} d\xi < \infty $
if $s< 2r_1$ which is an improvement over $2r$ as as long as $t > r$.)  Now we use (\ref{gamma_j}) again to show that $\phi_j$ has an analytic continuation to $S_{r_2}, r_2 = \min\{t, 2r_1\}$ which is bounded in  $S_{r_2 -\epsilon}$ with $\int |\phi_j(x+iy)|^2 dx \le C_\epsilon$ for all $|y| \le r_2 - \epsilon$.  Thus continuing we find that if $n$ is the smallest integer such that $2^n r \ge t$, we have for $m \le n, r_m = \min\{t, 2^m r\}$ and thus $r_n = t$.  We have $\phi_j(z)$ bounded in $S_{t-\epsilon}$ and $\int |\phi_j(x+iy)|^2dx \le C_\epsilon$ for $|y| < t- \epsilon$, and finally $\int f'(\xi)e^{s|\xi|}d\xi < \infty$ if $s < 2t$. The result now follows from Theorem \ref{Img>0}.
\end{proof}

The following is a corollary of the proof of Lemma \ref{expintegrability} and of Proposition \ref{Img>0}.

\begin{corollary}
Suppose $g$ and $f$ have analytic continuations to the strips $S_t$ and $S_{t'}$, bounded in any smaller strips.  Then $\int f'(\xi) e^{2s|\xi|}d\xi < \infty$ if $s< t$ and $\int g'(x) e^{2s|x|}dx < \infty$ if $s< t'$.  In addition $g\in K_t$ and $f\in K_{t'}$.  
\end{corollary}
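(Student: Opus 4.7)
The plan is to recognize this corollary as essentially a packaging of the bootstrap already carried out in Lemma \ref{expintegrability}, applied symmetrically to both $f$ and $g$. No genuinely new ingredients are required; what remains is to glue together the three items (analytic continuation of $\phi_j$, exponential integrability of $f'$, and positivity of $\text{Im}\, g$) that the proof of that lemma already produces.

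First I would treat $g$. By Theorem \ref{minimalrr'} there is some $r > 0$ with $g \in K_r$ and $\int f'(\xi) e^{2s|\xi|}\, d\xi < \infty$ for $s < r$. If $t \le r$, both claimed conclusions for $g$ follow from the inclusion $K_r \subseteq K_t$ and from the monotonicity of the integrability condition in $s$. If $t > r$, Lemma \ref{expintegrability} applies directly: its proof uses the identity (\ref{gamma_j}) together with the Fourier-side relation $\widehat{\phi_{j,y}}(\xi) = e^{-y\xi}\hat\phi_j(\xi)$ to set up the recursion $r_m = \min\{t, 2^m r\}$, which analytically continues each $\phi_j$ up to $S_{t-\varepsilon}$ with $\int |\phi_j(x+iy)|^2\, dx$ uniformly bounded for $|y| < t - \varepsilon$, and upgrades the exponential integrability of $f'(\xi) e^{s|\xi|}$ all the way up to $s < 2t$. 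This is exactly the asserted $\int f'(\xi) e^{2s|\xi|}\, d\xi < \infty$ for $s < t$. Armed with this integrability of $\widehat{f'}(-2iy)$ on $|y| < t$, one invokes Proposition \ref{Img>0} with $r$ replaced by $t$ to conclude $\text{Im}\, z \cdot \text{Im}\, g(z) \ge 0$ on $S_t$, i.e., $g \in K_t$.

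For $f$ I would repeat the same argument with the Fourier-conjugate kernel (\ref{Fkernel}) in place of (\ref{kernel}), so that $\widehat{\phi_j}$ and $f$ take the roles of $\phi_j$ and $g$, and the strip $S_{t'}$ takes the role of $S_t$. This delivers $f \in K_{t'}$ and $\int g'(x) e^{2s|x|}\, dx < \infty$ for $s < t'$ at once, using the symmetry noted just after (\ref{Fkernel}).

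The one point needing attention---already handled inside Lemma \ref{expintegrability}---is ensuring the bootstrap genuinely terminates at $t$ (resp.\ $t'$). The doubling recursion $r_{m+1} = \min\{t, 2 r_m\}$ reaches $r_n = t$ after finitely many steps, but at each step one must preserve both analyticity \emph{and} uniform horizontal $L^2$ bounds on the $\phi_j$, because it is only the $L^2$ control that feeds the Fourier step improving the decay of $f'$. This is the step I expect to be most delicate to write out carefully, though all the required estimates are already in place in the proofs of Theorem \ref{minimalrr'} and Lemma \ref{expintegrability}.
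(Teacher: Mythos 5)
Your proposal is correct and follows the paper's own route: the paper states this corollary precisely as a consequence of the bootstrap in the proof of Lemma \ref{expintegrability} combined with Proposition \ref{Img>0}, applied symmetrically to $f$ and $g$ via the Fourier-conjugated kernel (\ref{Fkernel}), which is exactly what you do (your separate handling of the trivial case $t\le r$ via $K_r\subset K_t$ is a harmless addition).
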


This completes the proof of Proposition \ref{largerrr'}.
\end{proof}

\end{document}